\def\blfootnote{\xdef\@thefnmark{}\@footnotetext}
\newtheorem{thm}{Theorem}[section]
\newtheorem{lem}[thm]{Lemma}
\newtheorem{prop}[thm]{Proposition}
\theoremstyle{definition}
\theoremstyle{remark}
\newtheorem{rem}[thm]{Remark}
\newfont{\eufm}{eufm10}
\newcommand{\e}{\varepsilon }
\renewcommand{\phi}{\varphi}
\newcommand{\Ga }{Cay(G, \mathcal A)}
\newcommand{\N}{\mathbb N}
\newcommand{\h}{\hookrightarrow _h }
\begin{document}

\title{Counting conjugacy classes in $Out(F_N)$}

\author{Michael Hull and Ilya Kapovich}

\address{\tt
Department of Mathematics,
PO Box 118105,
University of Florida,
Gainesville, FL 32611-8105
\newline
https://people.clas.ufl.edu/mbhull/ }
\email{\tt mbhull@ufl.edu}

\address{\tt Department of Mathematics, University of Illinois at
  Urbana-Champaign, 1409 West Green Street, Urbana, IL 61801, USA
  \newline http://www.math.uiuc.edu/\~{}kapovich/} \email{\tt
  kapovich@math.uiuc.edu}

\thanks{The second author was supported by the individual NSF
  grants DMS-1405146 and DMS-1710868. Both authors acknowledge the support of the conference grant DMS-1719710 ``Conference on Groups and Computation".}

\subjclass[2010]{Primary 20F65, Secondary 57M, 37B, 37D}

\date{}
\maketitle

\begin{abstract}
We show that if a f.g. group $G$ has a non-elementary WPD action on a hyperbolic metric space $X$, then the number of $G$-conjugacy classes of $X$-loxodromic elements of $G$ coming from a ball of radius $R$ in the Cayley graph of $G$ grows exponentially in $R$. As an application we prove that for $N\ge 3$ the number of distinct $Out(F_N)$-conjugacy classes of fully irreducibles $\phi$ from an $R$-ball in the Cayley graph of $Out(F_N)$ with $\log\lambda(\phi)$ on the order of $R$ grows exponentially in $R$. 
\end{abstract}

%\tableofcontents

%%%%%%%%%%%%%%%%%%%%%%%%%%%%%%%%%%%%%%%%%%%%%%%%%%%%%%%%%%%%%%%%%%%%%%%%%%%%%%%%%%%%%%%%%%%%%%%%%%%%%%%%%%%%%%%%%%%%%%

%%%%%%%%%%%%%%%%%%%%%%%%%%%%%%%%%%%%%%%%%%%%%%%%%%%%%%%%%%%%%%%%%%%

\section{Introduction}

The study of the growth of the number of periodic orbits of a dynamical system is an important theme in dynamics and geometry. A classic and still incredibly influential result of Margulis~\cite{M} computes the precise asymptotics of the number of closed geodesics of length $\le R$ (that is, of periodic orbits of geodesic flow of length $\le R$) for a compact hyperbolic manifold.  A recent famous result of Eskin and Mirzakhani \cite{EM}, which served as a motivation for this note, shows that for the moduli space $\mathcal M_g$ of a closed oriented surface $S_g$ of genus $g\ge 2$, the number $N(R)$ of closed Teichmuller geodesics in $\mathcal M_g$ of length $\le R$ grows as $N(R)\sim \frac{e^{hR}}{hR}$ as $R\to\infty$ where $h=6g-6$.  Note that in the context of a group action, counting closed geodesics amounts to counting conjugacy classes of elements rather than counting elements displacing a basepoint by distance $\le R$. The problems about counting conjugacy classes with various metric restrictions are harder than those about counting group elements and many group-theoretic tricks (e.g. embeddings of free subgroups or of free subsemigroups) do not, a priori, give any useful information about the growth of the number of conjugacy classes.   In the context of the Eskin-Mirzakhani result mentioned above, a closed Teichmuller geodesic of length $\le R$ in $\mathcal M_g$ comes from an axis $L(\phi)$ in the Teichm\"uller space $\mathcal T_g$ of a pseudo-Anosov element $\phi\in Mod(S_g)$ with translation length $\tau(\phi)\le R$. Note that $\tau(\phi)=\log \lambda(\phi)$, where $\lambda(\phi)$ is the dilatation of $\phi$. Thus $N(R)$ is the number of $Mod(S_g)$-conjugacy classes of pseudo-Anosov elements $\phi\in Mod(S_g)$ with $\log \lambda(\phi)\le R$, where $Mod(S_g)$ is the mapping class group.

In this note we study a version of this question for $Out(F_N)$ where $N\ge 3$.  The main analog of being pseudo-Anosov in the $Out(F_N)$ context is the notion of a fully irreducible element.  Every fully irreducible element $\phi\in Out(F_N)$ has a well-defined \emph{stretch factor} $\lambda(\phi)>1$ (see \cite{BH}) which is an invariant of the $Out(F_N)$-conjugacy class of $\phi$. For specific $\phi$ one can compute $\lambda(\phi)$ via train track methods, but counting the number of distinct $\lambda(\phi)$ with $\log(\lambda(\phi))\le R$ (where $\phi\in Out(F_N)$ is fully irreducible) appears to be an unapproachable task.  Other $Out(F_N)$-conjugacy invariants such as the index, the index list and the ideal Whitehead graph of fully irreducibles \cite{CH,CM,HM1,P}, also do not appear to be well suited for counting problems. Unlike the Teichm\"uller space, the Outer space $CV_N$ does not have a nice local analytic/geometric structure similar to the Teichmuller geodesic flow. Moreover, as we explain in  Remark~\ref{rem:exp} below, it is reasonable to expect that (for $N\ge 3$) the number of $Out(F_N)$-conjugacy classes of fully irreducibles $\phi\in Out(F_N)$  with $\log\lambda(\phi)$ probably grows as a double exponential in $R$ (rather than as a single exponential in $R$, as in the case of $\mathcal M_g$). Therefore, to get an exponential growth of the number of conjugacy classes one needs to consider more restricted context, namely where the elements come from an $R$-ball in the Cayley graph of $Out(F_N)$. 

Here we obtain:

\begin{thm}\label{thm:A}
Let $N\ge 3$.  Let $S$ be a finite generating set for $Out(F_N)$.
There exist constants $\sigma>1$ and $C_2>C_1>0$, $R_0\ge 1$ such that the following hols.
For $R\ge 0$ let $B_R$ be the ball of radius $R$ in the Cayley graph of $Out(F_N)$ with respect to $S$.

Denote by $c_R$ the number of distinct $Out(F_N)$-conjugacy classes of fully irreducible elements $\phi\in B_R$ such that $C_1R\le \log\lambda(\phi)\le C_2 R$.
Then
\[
c_R\ge \sigma^R
\]
for all $R\ge R_0$.
\end{thm}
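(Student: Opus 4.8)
\section*{Proof proposal for Theorem~\ref{thm:A}}

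The plan is to derive Theorem~\ref{thm:A} by applying the general WPD counting result (stated in the abstract) to the action of $Out(F_N)$ on the free factor complex $\mathcal{FF}_N$, and then to translate the resulting control over the geometry of $\mathcal{FF}_N$ into the two-sided estimate on $\log\lambda(\phi)$. First I would recall the Bestvina--Feighn inputs: for $N\ge 3$ the complex $\mathcal{FF}_N$ is Gromov hyperbolic, the action of $Out(F_N)$ on it is non-elementary, an element $\phi\in Out(F_N)$ acts loxodromically on $\mathcal{FF}_N$ if and only if $\phi$ is fully irreducible, and every fully irreducible element is a WPD element for this action. Thus the hypotheses of the general theorem hold with $X=\mathcal{FF}_N$, and "$X$-loxodromic" is synonymous with "fully irreducible." Applying the general result already produces exponentially many $Out(F_N)$-conjugacy classes of fully irreducibles with representatives in $B_R$; the remaining work is to place these representatives in the window $C_1R\le \log\lambda(\phi)\le C_2R$.

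The upper bound $\log\lambda(\phi)\le C_2R$ is the routine half and holds for \emph{every} fully irreducible $\phi\in B_R$. Fixing a basepoint $x_0$ in Outer space $CV_N$ equipped with the Lipschitz metric $d$, each generator moves $x_0$ a bounded amount, so $K:=\max_{s\in S^{\pm 1}} d(x_0,s\,x_0)<\infty$. Since for a fully irreducible $\phi$ the (forward) Lipschitz translation length equals $\log\lambda(\phi)$, sub-additivity of $d$ along the orbit together with equivariance gives $\log\lambda(\phi)\le d(x_0,\phi\,x_0)\le K\,|\phi|_S\le KR$, so one may take $C_2=K$.

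For the lower bound I would use the quantitative form of the construction underlying the general theorem. That construction does not merely produce exponentially many conjugacy classes in $B_R$: it represents them by elements $\phi\in B_R$ built as controlled words (in high powers of independent loxodromics) of length on the order of $R$, whose $\mathcal{FF}_N$-translation length therefore satisfies $\ell_{\mathcal{FF}}(\phi)\ge \kappa R$ for a fixed $\kappa>0$. To convert this into a lower bound on $\log\lambda(\phi)$, I would invoke the coarse comparison $\ell_{\mathcal{FF}}(\phi)\le C\log\lambda(\phi)+C'$, which follows from the fact that the natural coarse projection $CV_N\to\mathcal{FF}_N$ is coarsely Lipschitz combined with the identification of $\log\lambda(\phi)$ with the Lipschitz translation length. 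Hence $\kappa R\le C\log\lambda(\phi)+C'$, and for $R\ge R_0$ this yields $\log\lambda(\phi)\ge C_1R$ with any fixed $C_1<\kappa/C$. Together with the upper bound, all the constructed fully irreducibles lie in the prescribed window, proving the theorem.

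I expect the main obstacle to be this lower bound, and specifically the need to extract from the general theorem not just the count but representatives with $\mathcal{FF}_N$-translation length bounded below linearly in $R$, together with a clean comparison $\ell_{\mathcal{FF}}\lesssim \log\lambda$. The hyperbolicity and WPD inputs and the upper estimate are standard; the care lies in tracking constants so that $C_1R\le\log\lambda(\phi)$ and $\log\lambda(\phi)\le C_2R$ hold simultaneously for one and the same exponentially large family of conjugacy classes.
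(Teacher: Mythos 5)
Your proposal is correct, and its skeleton coincides with the paper's: both apply Theorem~\ref{thm:C} to the action of $Out(F_N)$ on the free factor complex $\mathcal F_N$ (with the Bestvina--Feighn facts that it is hyperbolic, the action is cobounded and WPD, and loxodromic $=$ fully irreducible), take the resulting quasi-isometrically embedded free subgroup $H$ with its conjugacy-separation property, and count cyclically reduced words of $\mathcal A$-length $n\approx R/M'$ to get exponentially many distinct $Out(F_N)$-conjugacy classes inside $B_R$. Where you genuinely diverge is the key quantitative step, pinning $\log\lambda(w)$ between $C_1R$ and $C_2R$. The paper runs this step entirely inside Outer space: it builds the $w$-periodic folding axis $\rho$ through a train track point, invokes Dowdall--Taylor (Proposition~\ref{prop:DT}) to conclude that $\rho$ lies in the thick part and fellow-travels the orbit path $\underline{\gamma}$, hence is a $d_C^{sym}$-quasigeodesic, and then compares two computations of the translation length $||w||_{CV}$ in $(CV_N,d_C^{sym})$; it also needs Handel--Mosher \cite{HM} to pass between $\lambda(w)$ and $\lambda(w^{-1})$. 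You instead work with the translation length $\ell_{\mathcal F}(w)$ in the free factor complex itself: the lower bound $\ell_{\mathcal F}(w)\ge n/K$ comes from Theorem~\ref{thm:C}(3) --- note this does require cyclically reduced representatives and the stable-length computation $\lim_k |w^k|_{\mathcal A}/k=n$, which is what your ``therefore'' is hiding --- the comparison $\ell_{\mathcal F}(w)\le C\log\lambda(w)$ comes from projecting the orbit $\{Gw^k\}$ of a train track point under the coarsely Lipschitz map $\pi:(CV_N,d_C)\to\mathcal F_N$ (the same fact from \cite{BF} that the paper uses), and the upper bound $\log\lambda(\phi)\le C_2R$ holds for every fully irreducible $\phi\in B_R$ by basepoint displacement and the realization of $\log\lambda$ as minimal $d_C$-displacement \cite{FM}. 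Your route is lighter: it avoids Dowdall--Taylor's thick-part fellow-traveling theorem and Handel--Mosher's inverse comparison altogether, and your $C_2$-bound applies to all of $B_R$, not just the constructed family. What the paper's heavier route buys is finer geometric information: the axes of the constructed elements lie uniformly in $CV_{N,\epsilon}$, and $\log\lambda(w)$ is shown to be two-sidedly proportional to the conjugacy length $n$ itself rather than merely to $R$. But for proving Theorem~\ref{thm:A} as stated, your argument suffices.
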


In the process of the proof of Theorem~\ref{thm:A} we establish the following general result:
\begin{thm}\label{thm:B}
Suppose $G$ has a cobounded WPD action on a hyperbolic metric space $X$ and $L$ is a non-elementary subgroup of $G$ for this action. Let $S$ be a generating set of $G$. 
For $R\ge 1$ let $b_R$ be the number of distinct $G$-conjugacy classes of elements of $g\in L$ that act loxodromically on $X$ and such that $|g|_S\le R$. Then there exist $R_0\ge 1$ and $c>1$ such that for all $R\ge R_0$
\[
b_R\ge c^R.
\]
\end{thm}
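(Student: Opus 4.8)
The plan is to reduce the problem to counting conjugacy classes inside a free subgroup on which $G$-conjugacy collapses to conjugacy within the subgroup. Since $L$ is non-elementary for the action, it contains two independent loxodromic WPD elements $a,b$. Fix a basepoint $o\in X$. Replacing $a,b$ by large powers and running a ping-pong (Schottky) argument on $X\cup\partial X$, I would produce $N\in\N$ so that $F:=\langle a^N,b^N\rangle\le L$ is free of rank two, the orbit map $F\to X$, $f\mapsto fo$, is a quasi-isometric embedding with quasiconvex image, and every nontrivial element of $F$ acts loxodromically on $X$. The essential additional property I would extract is that $F$ is \emph{almost malnormal} in $G$: for every $h\in G\setminus F$ the intersection $F\cap hFh^{-1}$ is finite.

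The payoff of almost malnormality is that $G$-conjugacy among infinite-order elements of $F$ coincides with $F$-conjugacy. Indeed, suppose $w,w'\in F$ are loxodromic (hence of infinite order) and $w'=hwh^{-1}$ for some $h\in G$. Then $\langle w'\rangle = h\langle w\rangle h^{-1}\le F\cap hFh^{-1}$, and since $\langle w'\rangle$ is infinite this intersection is infinite; almost malnormality then forces $h\in F$, so $w$ and $w'$ are already conjugate in $F$. Consequently the natural map from $F$-conjugacy classes of loxodromic elements to $G$-conjugacy classes of loxodromic elements is injective, and it suffices to exhibit exponentially many $F$-conjugacy classes landing in $B_R$.

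For the count, recall that in the rank-two free group $F$ conjugacy classes correspond to cyclically reduced words up to cyclic permutation. The number of cyclically reduced words of $\{a^N,b^N\}$-length exactly $n$ is at least $c_1^{\,n}$ for some constant $c_1>1$, and dividing by the at most $n$ cyclic rotations still leaves at least $c_1^{\,n}/n$ conjugacy classes, each consisting of loxodromic elements. If $w$ is represented by a cyclically reduced word of length $n$, then $|w|_S\le nC$ where $C:=\max\{|a^N|_S,\,|b^N|_S\}$, so every such $w$ lies in $B_R$ as soon as $n\le R/C$. Taking $n=\lfloor R/C\rfloor$ and combining with the previous paragraph yields $b_R\ge c_1^{\lfloor R/C\rfloor}/\lfloor R/C\rfloor\ge c^R$ for a suitable $c>1$ and all $R$ beyond some $R_0$, since the exponential term dominates the polynomial correction.

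The main obstacle is the construction in the first paragraph, specifically establishing almost malnormality of $F$; this is exactly where the WPD hypothesis enters. I would obtain it by arguing that high powers of two independent loxodromic WPD elements generate a free subgroup that is \he in $G$ (the standard consequence of WPD in the theory of hyperbolically embedded subgroups, available once the action is non-elementary and cobounded), together with the fact that any \he subgroup is almost malnormal. The verifications that every nontrivial element of $F$ is loxodromic and that the orbit map is a quasi-isometric embedding are routine ping-pong once $N$ is large, and coboundedness is needed only to place the WPD action inside this structural framework; the conjugacy-counting mechanism itself is entirely elementary once almost malnormality is in hand.
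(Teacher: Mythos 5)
Your overall strategy---produce a quasi-isometrically embedded free subgroup of $L$ in which $G$-conjugacy of infinite-order elements collapses to conjugacy inside the subgroup, then count cyclically reduced words---is the same as the paper's (Theorem~\ref{thm:C} plus the counting mechanism used in the proof of Theorem~\ref{thm:A}), and your conjugacy-transfer argument and your counting estimates are correct as far as they go. However, there is a genuine gap at the key structural step: the claim that $F=\langle a^N,b^N\rangle$ can be made almost malnormal (equivalently, hyperbolically embedded) in $G$ is false in general, and no choice of $a$, $b$, $N$ can repair it. The obstruction is torsion commuting with $F$. Concretely, let $G=G_0\times K$ with $K$ finite and nontrivial, let $G_0$ act coboundedly, non-elementarily and WPD on a hyperbolic space $X$, and let $K$ act trivially; this action of $G$ is still cobounded and WPD, and $L=G$ is non-elementary. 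Any free subgroup $F\le G$ is torsion-free, so no nontrivial $k\in K$ lies in $F$; but such $k$ is central in $G$, so $F\cap kFk^{-1}=F$ is infinite and $F$ is not almost malnormal. Since hyperbolically embedded subgroups are almost malnormal \cite[Proposition 4.33]{DGO}, no free subgroup of such a $G$ is hyperbolically embedded, so the ``standard consequence of WPD'' you invoke does not exist in the form you state it.

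This is exactly why the paper's Theorem~\ref{thm:C} is formulated with an auxiliary finite group $K$. Following \cite[Theorem 6.14]{DGO}, one first uses \cite[Lemma 5.6]{H} to find non-commensurable loxodromic elements $g_1,\dots,g_6\in L$ whose elementary closures all have the product form $E(g_i)=\langle g_i\rangle\times K_G(L)$ with the \emph{same} finite group $K_G(L)$; then suitable products of high powers, $x=g_1^ng_2^ng_3^n$ and $y=g_4^ng_5^ng_6^n$, generate a free group $H$ with $H\times K\h G$, where $K=K_G(L)$. It is $H\times K$, not $H$, that is almost malnormal, and the conjugacy transfer still works because $K$ acts trivially on $H$ by conjugation: if $w'=hwh^{-1}$ with $w,w'\in H$ of infinite order, then $\langle w'\rangle\le (H\times K)\cap h(H\times K)h^{-1}$ is infinite, so $h=uk$ with $u\in H$, $k\in K$, and $w'=u(kwk^{-1})u^{-1}=uwu^{-1}$. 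With this correction (i.e., quoting Theorem~\ref{thm:C} or reproducing the argument of \cite[Theorem 6.14]{DGO} rather than a bare ping-pong claim), your counting argument goes through verbatim; note also that your argument as written would be fine under the additional hypothesis that $G$ is torsion-free.
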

This statement is mostly relevant in the case where $G$ is finitely generated and $S$ is finite, but the conclusion of Theorem~\ref{thm:B} is not obvious even if $S$ is infinite. Theorem~\ref{thm:B} is a generalization of \cite[Theorem 1.1]{HO}, which states (under a different but equivalent hypothesis, see \cite{O}) that such $G$ has exponential conjugacy growth. The proofs of Theorem~\ref{thm:B} and \cite[Theorem 1.1]{HO} are similar and are both based on the theory of hyperbolically embedded subgroups developed in \cite{DGO}, and specifically the construction of virtually free hyperbolically embedded subgroups in [Theorem 6.14]\cite{DGO}.

Both Theorem~\ref{thm:A} and Theorem~\ref{thm:B} are derived using:
\begin{thm}\label{thm:C}
Suppose $G$ has a cobounded WPD action on a hyperbolic metric space $X$ and $L$ is a non-elementary subgroup of $G$ for this action. Then for every $n\ge 2$ there exists a subgroup $H\le L$ and a finite group $K\leq G$ such that: 
\begin{enumerate}
\item $H\cong F_n$.
\item $H\times K\h G$.
\item The orbit map $H\to X$ is a quasi-isometric embedding.
\end{enumerate}

In particular, every element of $H$ is loxodromic with respect to the action on $X$ and two elements of $H$ are conjugate in $G$ if and only if they are conjugate in $H$.
\end{thm}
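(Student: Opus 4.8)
The central tool is the Dahmani--Guirardel--Osin construction of hyperbolically embedded subgroups, in particular \cite[Theorem 6.14]{DGO}, which produces (virtually) free hyperbolically embedded subgroups out of independent loxodromic WPD elements; this is exactly the engine the excerpt points to. Note first that, since $L$ is non-elementary for the action, $G$ admits a non-elementary WPD action on $X$, so by Osin's characterization \cite{O} the group $G$ is acylindrically hyperbolic and hence possesses a unique maximal finite normal subgroup $K$. The plan is: (i) extract $n$ independent loxodromic WPD elements inside $L$; (ii) feed them to \cite[Theorem 6.14]{DGO} to obtain a free subgroup $H\cong F_n$ of $L$ that is hyperbolically embedded with quasi-isometrically embedded orbit, giving (1) and (3); (iii) adjoin $K$ as a direct factor to obtain (2); and (iv) deduce the ``in particular'' clause from standard properties of hyperbolically embedded subgroups.

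For (i): since $L$ is non-elementary it contains two independent loxodromic WPD elements (loxodromicity and the WPD property referring throughout to the ambient $G$-action), and playing the usual north--south dynamics on $\partial X$ off distinct conjugates then yields pairwise independent loxodromic WPD elements $a_1,\dots,a_n\in L$ with pairwise disjoint fixed-point pairs. Applying \cite[Theorem 6.14]{DGO} after replacing the $a_i$ by suitably large powers, the attendant ping-pong produces $H_0=\langle a_1^{m_1},\dots,a_n^{m_n}\rangle\cong F_n$ with $H_0\le L$ and $H_0\h G$; since the generators are loxodromics of large, comparable translation lengths, reduced words in them track quasigeodesics in $X$, so the orbit map $H_0\to X$ is a quasi-isometric embedding. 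This establishes (1) and (3).

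For (ii)--(iii): as $K$ is finite and normal, conjugation defines a homomorphism $H_0\to \mathrm{Aut}(K)$ with finite image, so its kernel contains a free subgroup $H\le H_0$ of rank exactly $n$ with $[H,K]=1$; passing to $H$ preserves both the hyperbolic embedding and the quasi-isometric orbit embedding, as these are inherited by subgroups. Since $H$ is torsion-free, $H\cap K=1$, whence $HK=H\times K$, and a finite normal subgroup may be absorbed into a hyperbolically embedded subgroup without destroying the embedding, giving $H\times K\h G$, which is (2). For the final clause, (3) forces every nontrivial element of $H$ to be loxodromic. For conjugacy separation, hyperbolically embedded subgroups are almost malnormal \cite{DGO}: if $h_1,h_2\in H$ with $gh_1g^{-1}=h_2$ and $h_1\ne 1$, then $h_2$ has infinite order and lies in $(H\times K)\cap g(H\times K)g^{-1}$, forcing $g\in H\times K$; writing $g=hk$ with $h\in H$, $k\in K$ and using $[H,K]=1$ gives $h_2=hh_1h^{-1}$, so $h_1,h_2$ are conjugate in $H$.

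The main obstacle lies in the packaging in steps (ii)--(iii) rather than in the geometric core: the genuinely hard input, namely the ping-pong producing a free hyperbolically embedded subgroup, is handed to us by \cite[Theorem 6.14]{DGO}. What requires care is doing everything compatibly at once---keeping the free group inside the prescribed non-elementary subgroup $L$, upgrading the finite radical $K$ to an honest direct factor while preserving the hyperbolic embedding, and checking that these group-theoretic adjustments leave the quasi-isometric embedding of the orbit map intact so that the ``in particular'' conclusions survive verbatim.
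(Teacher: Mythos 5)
Your overall route---running the Dahmani--Guirardel--Osin construction on loxodromic WPD elements chosen inside $L$, then attaching a finite subgroup $K$---is indeed the paper's route, but your choice of $K$ and the way you attach it contain genuine errors. You take $K$ to be the maximal finite \emph{normal} subgroup of $G$; the paper instead takes $K=K_G(L)$, the maximal finite subgroup of $G$ \emph{normalized by $L$} (supplied by \cite[Lemma 5.6]{H}), and this difference is fatal, not cosmetic. Concretely, let $K_0$ be a nontrivial finite group, let $G=(F_2\times K_0)\ast\Z$ act on its Cayley graph (a proper, cobounded action on a hyperbolic space, hence WPD), and let $L=F_2\times\{1\}$, which is non-elementary. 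The maximal finite normal subgroup of $G$ is trivial, so in your scheme $K=1$ and conclusion (2) would read $H\h G$ with $H$ free. But this is impossible: $K_0$ centralizes every $g\in L$, so $K_0\subseteq E(g)$; and any hyperbolically embedded subgroup $W\le G$ containing an infinite-order element $g$ must contain all of $E(g)$, since each $t\in E(g)$ satisfies $tg^mt^{-1}=g^{\pm n}$ for some $m,n\neq 0$, whence $W\cap W^t$ is infinite and almost malnormality (\cite[Proposition 4.33]{DGO}) forces $t\in W$. Thus every hyperbolically embedded subgroup meeting $L$ nontrivially contains $K_0$ and cannot be torsion-free. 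The same argument shows that your intermediate claim ``$H_0\h G$'' in steps (i)--(ii) is false whenever $K_G(L)\neq 1$; this obstruction is precisely why the theorem carries the factor $K$ at all.

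Two of your bridging steps also fail on their own. First, a subgroup generated by proper powers is never hyperbolically embedded: if $m_1\ge 2$ then $a_1\notin H_0=\langle a_1^{m_1},\dots,a_n^{m_n}\rangle$, yet $a_1\langle a_1^{m_1}\rangle a_1^{-1}=\langle a_1^{m_1}\rangle$, so $H_0\cap H_0^{a_1}$ is infinite and almost malnormality fails. This is exactly why the paper, following the proof of \cite[Theorem 6.14]{DGO}, takes the free generators to be \emph{products} $x=g_1^ng_2^ng_3^n$ and $y=g_4^ng_5^ng_6^n$ of six pairwise non-commensurable loxodromics rather than powers of single elements. Second, the principle you invoke that hyperbolic embedding ``is inherited by subgroups'' is false: your $H$ lies in the kernel $N$ of $H_0\to\mathrm{Aut}(K)$, a proper finite-index normal subgroup of $H_0$ whenever the conjugation action is nontrivial, and then $N\cap hNh^{-1}=N$ is infinite for any $h\in H_0\setminus N$, so $N$ is not almost malnormal in $G$ and hence not hyperbolically embedded; nothing repairs this upon passing further down to $H$. (Your final ``absorption'' step is likewise moot: by the argument above, a free hyperbolically embedded subgroup can only exist when the relevant finite subgroup is already trivial.) The paper sidesteps all of this in one stroke: \cite[Lemma 5.6]{H} provides $g_1,\dots,g_6\in L$ with $E(g_i)=\langle g_i\rangle\times K_G(L)$, so the generators $x,y$ commute with $K=K_G(L)$ \emph{by construction}, and then $H\times K\h G$ is proved directly by verifying the hypotheses of \cite[Theorem 4.42]{DGO}, never passing through the (generally false) statement $H\h G$.
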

Here for a subgroup $A$ of a group $G$ writing $A\h G$ means that $A$ is \emph{hyperbolically embedded in} $G$.

The proof of Theorem~\ref{thm:A} also uses, as an essential ingredient, a result of Dowdall and Taylor \cite[Theorem 4.1]{DT1} about quasigeodesics in the Outer spaces and the free factor complex.  Note that it is still not known whether the action of $Out(F_N)$ on the free factor complex is acylindrical, and, in a sense, the use of Theorem~\ref{thm:B} provides a partial workaround here.
Note that in the proof of Theorem~\ref{thm:A} instead of using Theorem~\ref{thm:C} we could have used an argument about stable subgroups having finite \emph{width}. It is proved in \cite{ADT} that convex cocompact (that is, finitely generated and with the orbit map to the free factor complex being a quasi-isometric embedding) subgroups of $Out(F_N)$ are \emph{stable} in $Out(F_N)$. In turn, it is proved in \cite{AMST} that if $H$ is a stable subgroup of a group $G$, then $H$ has finite width in $G$ (see \cite{AMST} for the relevant definitions). Having finite width is a property sufficiently close to being malnormal to allow counting arguments with conjugacy classes to go through. The proof given here, based on using Theorem~\ref{thm:C} above, is different in flavor and proceeds from rather general assumptions. Note that in the conclusion of Theorem~\ref{thm:C} the fact that $H\times K\h G$ implies that $H$ and   $H\times K$ are stable in $G$. 

Another possible approach to counting conjugacy classes involves the notion of ``statistically convex cocompact actions" recently introduced and studied by Yang, see \cite{Y1,Y2} (particularly \cite[Theorem~B]{Y2} about genericity of conjugacy classes of strongly contracting elements).  However, Yang's results only apply to actions on proper geodesic metric spaces with finite critical exponent for the action, such as, for example, the action of the mapping class group on the Teichm\"uller space. For essentially the same reasons as explained in Remark~\ref{rem:exp} below, the action of $Out(F_N)$ (where $N\ge 3$) on $CV_N$, with either the asymmetric or symmetrized Lipschitz metric, has infinite critical exponent. Still, it is possible that the statistical convex cocompacness methods may be applicable to the actions on $CV_N$ of some interesting subgroups of $Out(F_N)$.

The first author would like to thank the organizers of the conference ``Groups and Computation" at Stevens Institute of Technology, June 2017.  The second author is grateful to Spencer Dowdall, Samuel Taylor, Wenyuan Yang and Paul Schupp for helpful conversations.

\section{Conjugacy classes of loxodromics for WPD actions}
We assume throughout that all metric spaces under consideration are geodesic and all group actions on metric spaces are by isometries. Given a generating set $\mathcal{A}$ of a group $G$, we let $\Ga$ denote the Cayley graph of $G$ with respect to $\mathcal A$. In order to directly apply results from \cite{DGO} it is more convenient to consider actions on Cayley graphs. By the well known Milnor-Svarc Lemma this is equivalent to considering cobounded actions.

\begin{lem}[Milnor-Svarc]\label{lem:MS}
If $G$ acts coboundedly on a geodesic metric space $X$, then there exists $\mathcal{A}\subseteq G$ such that $\Ga$ is $G$-equivarently quasi-isometric to $X$.
\end{lem}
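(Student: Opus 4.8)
The plan is to run the classical Milnor--\v{S}varc argument, but with a (possibly infinite) generating set tailored to the orbit map, since we are given neither properness nor cocompactness, only coboundedness. First I would fix a basepoint $x_0\in X$. Coboundedness provides a constant $D\ge 0$ such that every point of $X$ lies within distance $D$ of the orbit $Gx_0$. I would then set
\[
\mathcal A=\{\, a\in G : \dx(x_0, a x_0)\le 2D+1 \,\},
\]
which is symmetric (the action is by isometries) and contains the identity, hence is a legitimate generating set once it is shown to generate. The candidate quasi-isometry is the $G$-equivariant orbit map $\iota\colon \Ga\to X$, $g\mapsto g x_0$, equivariant for left multiplication on $G$ and the given action on $X$.

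Next I would establish the two quasi-isometry inequalities. To see that $\mathcal A$ generates $G$ and to bound word length from above, take $g\in G$, pick a geodesic in $X$ from $x_0$ to $g x_0$, and subdivide it into $n=\lceil \dx(x_0,gx_0)\rceil$ subarcs of length $\le 1$ with endpoints $p_0=x_0,\dots,p_n=g x_0$. Using coboundedness choose $g_i\in G$ with $\dx(p_i, g_i x_0)\le D$, taking $g_0=e$ and $g_n=g$. The triangle inequality gives $\dx(g_i x_0,g_{i+1}x_0)\le 2D+1$, so by equivariance $g_i^{-1}g_{i+1}\in\mathcal A$; the telescoping product $g=(g_0^{-1}g_1)\cdots(g_{n-1}^{-1}g_n)$ then shows $g\in\langle\mathcal A\rangle$ and $|g|_{\mathcal A}\le \dx(x_0,gx_0)+1$. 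Conversely, writing a minimal word $g=a_1\cdots a_m$ with $m=|g|_{\mathcal A}$ and applying the triangle inequality together with equivariance ($\dx(a_1\cdots a_{i-1}x_0, a_1\cdots a_i x_0)=\dx(x_0,a_i x_0)$) yields $\dx(x_0,g x_0)\le (2D+1)|g|_{\mathcal A}$.

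Combining these with $\dx(g x_0,h x_0)=\dx(x_0,g^{-1}h x_0)$ and $|g^{-1}h|_{\mathcal A}=\mathrm d_{\mathcal A}(g,h)$ gives the two-sided linear comparison
\[
\frac{1}{2D+1}\,\dx(g x_0,h x_0)\le \mathrm d_{\mathcal A}(g,h)\le \dx(g x_0,h x_0)+1,
\]
so $\iota$ is a $G$-equivariant quasi-isometric embedding on the vertex set, and coboundedness makes its image $D$-dense, whence $\iota$ is a quasi-isometry. Passing from the vertex set $(G,\mathrm d_{\mathcal A})$ to the full graph $\Ga$ with its path metric perturbs distances by at most $1$, so the conclusion transfers. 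The only feature distinguishing this from the textbook statement is that, lacking properness or cocompactness, $\mathcal A$ may be infinite and local finiteness is unavailable; thus each quasi-isometry constant must be produced explicitly from $D$ rather than extracted from a compactness argument. I do not expect any genuine obstacle here beyond this bookkeeping.
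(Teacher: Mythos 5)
Your proposal is correct. The paper gives no proof of this lemma at all --- it is stated as the ``well known Milnor--\v{S}varc Lemma'' precisely in this cobounded-only form (no properness, possibly infinite $\mathcal A$), and your argument is the standard one that justifies it: take $\mathcal A$ to be all elements moving the basepoint at most $2D+1$, chain along a subdivided geodesic to bound word length by $\dx(x_0,gx_0)+1$, and use the triangle inequality for the reverse bound. The only bookkeeping worth noting is the degenerate case $\dx(x_0,gx_0)=0$ (a stabilizer element), where your subdivision has $n=0$ and the telescoping product is empty; this is harmless since such $g$ already lies in $\mathcal A$, so $|g|_{\mathcal A}\le 1$ and both inequalities persist.
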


For a subgroup $H\leq G$ and  a subset $S\subseteq G$, we use $H\h (G, S)$ to denote that $H$ is \emph{hyperbolically embedded in $G$ with respect to $S$}, or simply $H\h G$ if $H\h(G, S)$ for some $S\subseteq G$. We refer to \cite{DGO} for the definition of a hyperbolically embedded subgroup. The only property of a hyperbolically embedded subgroup $H$ that we use is that $H$ is \emph{almost malnormal}, that is for $g\in G\setminus H$, the intersection of $H$ and $H^g$ is finite \cite[Proposition 4.33]{DGO}. Note that this implies that any two infinite order elements of $H$ are conjugate in $G$ if and only if they are conjugate in $H$.

For a metric space $X$ and an isometry $g$ of $X$, the asymptotic translation length $||g||_X$ is defined as $||g||_X=\lim_{i\to\infty}\frac{d(g^ix,x)}{i}$, where $x\in X$. It is well-known that  this limit always exists and is independent of $x\in X$. If $||g||_X>0$ then $g$ is called \emph{loxodromic}. For a group $G$ acting on $X$, a loxodromic element is called a \emph{WPD element} if for all $\e>0$ and all $x\in X$, there exists $N\in\N$ such that
\[
|\{h\in G\;|\; d(x, hx)<\e, d(g^nx, hg^nx)<\e\}|<\infty.
\]

We say the action of $G$ on $X$ is \emph{WPD} if every loxodromic element is a WPD element.

Now we fix a subset $\mathcal A\subseteq G$ such that $\Ga$ is hyperbolic and the action of $G$ on $\Ga$ is WPD. We say $g\in G$ is loxodromic if it is loxodromic with respect to the action of $G$ on $\Ga$. Each such element is contained in a unique, maximal virtually cyclic subgroup $E(g)$ \cite[Lemma 6.5]{DGO}.

\begin{lem}\cite[Corollary 3.17]{H}\label{L1}
If $g_1,..., g_n$ are non-commensurable loxodromic elements, then $\{E(g_1),..., E(g_n)\}\h (G, \mathcal A)$.
\end{lem}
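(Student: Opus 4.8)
The plan is to verify directly the two conditions in the definition of hyperbolic embeddedness from \cite{DGO} for the family $\{E(g_i)\}_{i=1}^n$ with respect to the relative generating set $\mathcal A$. Set $\mathcal H=\bigsqcup_{i=1}^n\bigl(E(g_i)\setminus\{1\}\bigr)$ and form the relative Cayley graph $\Gamma(G,\mathcal A\sqcup\mathcal H)$; fix the base vertex $o=1\in G$. Concretely, I must show (a) that $\Gamma(G,\mathcal A\sqcup\mathcal H)$ is hyperbolic, and (b) that for each $i$ the relative metric $\hat d_i$ on $E(g_i)$ --- the length of a shortest path in $\Gamma(G,\mathcal A\sqcup\mathcal H)$ from one element of $E(g_i)$ to another that uses no edge lying in $E(g_i)$ --- is proper, i.e. every $\hat d_i$-ball is finite.

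The geometric engine is as follows. Because $g_i$ is loxodromic and $E(g_i)$ is the maximal virtually cyclic subgroup containing it (Lemma 6.5 of \cite{DGO}), $E(g_i)$ is quasi-isometrically embedded in $\Ga$ and the orbit $A_i:=E(g_i)\cdot o$ is a quasigeodesic line, the \emph{axis} of $g_i$. Non-commensurability of the $g_i$ ensures that distinct cosets yield genuinely distinct axes: if $fE(g_i)\ne f'E(g_j)$ then $fA_i$ and $f'A_j$ have distinct pairs of endpoints in $\partial\Ga$ (for $i=j$ this uses the maximality of $E(g_i)$, and for $i\ne j$ it uses non-commensurability). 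The WPD hypothesis then promotes this to a quantitative \emph{geometric separation}: for every $C>0$ there is $D=D(C)$ so that whenever $fE(g_i)\ne f'E(g_j)$ the intersection $\mathcal N_C(fA_i)\cap\mathcal N_C(f'A_j)$ has diameter at most $D$. The point is that a long fellow-travelling segment between two distinct translated axes would exhibit infinitely many elements of $G$ moving a fixed long subsegment of an axis a bounded amount, contradicting the WPD condition.

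Granting geometric separation, I would derive both defining conditions. For properness of $\hat d_i$ in (b): a path in $\Gamma(G,\mathcal A\sqcup\mathcal H)$ avoiding $E(g_i)$-edges projects to a path in $\Ga$ whose excursions into cosets are controlled by the separation constants, so if some $h\in E(g_i)$ had small $\hat d_i(1,h)$ but large $\Ga$-distance from $1$, the projected path would fellow-travel $A_i$ along a long segment while only boundedly re-entering $E(g_i)$-cosets, again violating WPD; hence each $\hat d_i$-ball contains only finitely many elements. For hyperbolicity in (a): the graph $\Gamma(G,\mathcal A\sqcup\mathcal H)$ is $G$-equivariantly quasi-isometric to the space obtained from the hyperbolic graph $\Ga$ by coning off the family of cosets $\{fE(g_i)\}$, and coning off a uniformly quasiconvex, uniformly geometrically separated family over a hyperbolic space again produces a hyperbolic space.

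The hard part will be condition (a), the hyperbolicity of $\Gamma(G,\mathcal A\sqcup\mathcal H)$: one must show that inserting the $E(g_i)$-edges creates no fat geodesic triangles, and the crux is to upgrade the WPD condition to separation constants $D(C)$ that are \emph{uniform} over all cosets $fE(g_i)$, so that the thin-triangle estimates in the coned-off graph hold with global constants rather than coset-by-coset ones. This uniformisation is exactly the technical heart of the theory developed in \cite{DGO}, where hyperbolically embedded elementary subgroups are constructed from WPD loxodromics; the present statement is the specialisation of that theory to the fixed Cayley graph $\Ga$, and it is recorded as \cite[Corollary 3.17]{H}. Combining (a) and (b) gives $\{E(g_1),\dots,E(g_n)\}\h(G,\mathcal A)$, as required.
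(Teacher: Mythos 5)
The paper contains no proof of this lemma to compare against: it is imported verbatim from \cite[Corollary 3.17]{H}, as the bracketed citation in the statement indicates. Measured against the proof that citation points to, your outline reconstructs the correct strategy. The result is indeed obtained by verifying the two defining conditions of hyperbolic embeddedness through the criterion of \cite[Theorem 4.42]{DGO}, whose hypotheses are exactly the two properties you isolate: uniform quasi-isometric embeddedness of the coset orbits $fE(g_i)\cdot o$ (the axes), and uniform geometric separation of distinct cosets. WPD and pairwise non-commensurability enter precisely where you say they do, and your identification of the coned-off graph with $\Gamma(G,\mathcal A\sqcup\mathcal H)$ is also how the argument runs.

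The genuine gap is that, as written, your text is a reduction rather than a proof, and its one concrete fallback is circular. The two claims you call the ``geometric engine'' and the ``hard part'' --- promoting WPD to separation constants $D(C)$ that are uniform over all cosets, and deriving hyperbolicity of $\Gamma(G,\mathcal A\sqcup\mathcal H)$ together with properness of the relative metrics $\hat d_i$ from such separation --- are not routine; they constitute \cite[Theorem 4.42]{DGO} and the lemmas of \cite[Section 6]{DGO}, several pages of work. Your heuristic for separation also elides the delicate point: a long fellow-travelling segment between $fA_i$ and $f'A_j$ must be converted into \emph{infinitely many distinct} group elements almost fixing two far-apart points, and producing distinctness is exactly where maximality of $E(g_i)$ (for $i=j$) and non-commensurability (for $i\ne j$) do real work; without it the WPD condition is not contradicted. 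Most importantly, your closing sentence justifies the uniformisation step by invoking \cite[Corollary 3.17]{H} --- the very statement being proved --- which no blind proof may do. If that appeal is replaced by an appeal to \cite[Theorem 4.42]{DGO} together with the separation estimates for WPD loxodromics in \cite[Section 6]{DGO}, your argument becomes a legitimate proof modulo standard citations, and is then essentially the proof underlying the paper's citation.
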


A subgroup $L\leq G$ is called \emph{non-elementary} if $L$ contains two non-commensurable loxodromic elements. Let $K_G(L)$ denote the maximal finite subgroup of $G$ normalized by $L$. When $L$ is non-elementary, this subgroup is well-defined by \cite[Lemma 5.5]{H}.

The following lemma was proved in \cite{H} under the assumption that the action is acylindrical, but the proof only uses that the action is WPD.

\begin{lem}\cite[Lemma 5.6]{H}\label{L2}
Let $L$ be a non-elementary subgroup of $G$. Then there exists non-commensurable, loxodromic elements $g_1,..., g_n$ contained in $L$ such that $E(g_i)=\langle g_i\rangle\times K_G(L)$.
\end{lem}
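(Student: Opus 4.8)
The plan is to reduce to producing a single well-structured loxodromic, spread it out by conjugation, and along the way record a clean description of $K:=K_G(L)$. The first observation is that $K\le E(g)$ for \emph{every} loxodromic $g\in L$: since $g\in L$ normalizes the finite group $K$, the subgroup $\langle g\rangle K$ is finite-by-cyclic, hence virtually cyclic, and contains the loxodromic $g$; as $E(g)$ is the maximal virtually cyclic subgroup containing $g$ (the stabilizer of the endpoint pair $\{g^+,g^-\}$), we get $\langle g\rangle K\le E(g)$, so $K\le E(g)$. Thus $K\le\bigcap_g E(g)$, the intersection over all loxodromic $g\in L$. I would then verify the reverse inclusion: this intersection is normalized by $L$ (because $l\mapsto lgl^{-1}$ permutes the loxodromics of $L$ and $lE(g)l^{-1}=E(lgl^{-1})$), and it is finite (it contains no loxodromic, as such an element would be commensurable with every loxodromic of $L$, contradicting non-elementarity; hence it is a torsion subgroup of the virtually cyclic group $E(a)$ for a fixed loxodromic $a\in L$, so finite). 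Maximality of $K_G(L)$ then gives $\bigcap_g E(g)=K$.

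With this in hand I would build one loxodromic $g_0\in L$ with $E(g_0)=\langle g_0\rangle\times K$. Choose non-commensurable loxodromics of $L$ and replace them by powers so that they centralize $K$ (a finite-index subgroup of $\langle g\rangle$ does this, and passing to a power does not change $E(g)$). Among the loxodromics of $L$ that centralize $K$, pick $g_0$ so that the finite radical $F(g_0)$ of the orientation-preserving subgroup $E^+(g_0)$ — the finite, all-torsion kernel of the translation-length homomorphism $E^+(g_0)\to\Z$ — has minimal order; since $K\le F(g_0)$ always, a minimizer exists. I claim $F(g_0)=K$. If not, take $x\in F(g_0)\setminus K$; by the characterization above there is a loxodromic $h\in L$ with $x\notin E(h)$, which is then automatically non-commensurable with $g_0$ (otherwise $E(h)=E(g_0)\ni x$) and which, after replacing it by a power, centralizes $K$. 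By Lemma~\ref{L1} we have $\{E(g_0),E(h)\}\h(G,\mathcal A)$, and the key technical input is that for large $N$ the product $w=g_0^Nh^N\in L$ is loxodromic, non-commensurable with $g_0$ and $h$, and has its elementary-subgroup torsion confined to the factors, i.e.\ $F(w)\subseteq F(g_0)\cap F(h)$. Since $x\in F(g_0)$ but $x\notin E(h)\supseteq F(h)$, this gives $K\le F(w)\subsetneq F(g_0)$, contradicting minimality; hence $E^+(g_0)/K\cong\Z$.

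It then remains to ensure that $g_0$ maps to a \emph{generator} of $E^+(g_0)/K\cong\Z$ (primitivity modulo $K$) and that no element reverses the orientation of the axis (so $E(g_0)=E^+(g_0)$); both are arranged by taking $g_0$ to be a suitably non-periodic and non-reversible product word in the hyperbolically embedded family $\{E(g_0),E(h)\}\h(G,\mathcal A)$, for which the free-product-like behaviour forbids a proper root modulo $K$ and forbids conjugating $g_0$ to a commensurable of $g_0^{-1}$. This produces $g_0\in L$ with $E(g_0)=\langle g_0\rangle\times K$.

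Finally I would spread $g_0$ out. Fix a loxodromic $f\in L$ not commensurable with $g_0$ and set $g_i=f^ig_0f^{-i}$ for $i=1,\dots,n$. Each $g_i\in L$ is loxodromic, and since $f\in L$ normalizes $K$ we obtain $E(g_i)=f^iE(g_0)f^{-i}=\langle g_i\rangle\times f^iKf^{-i}=\langle g_i\rangle\times K$. They are pairwise non-commensurable: if $g_i$ were commensurable with $g_j$ then $f^{i-j}\in N_G(E(g_0))=E(g_0)$ (the maximal elementary subgroup is self-normalizing, being the stabilizer of $\{g_0^+,g_0^-\}$), forcing the loxodromic $f$ to be commensurable with $g_0$, a contradiction. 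The main obstacle is the technical claim $F(w)\subseteq F(g_0)\cap F(h)$ together with the primitivity and orientation control of the product elements $g_0^Nh^N$ — precisely the point at which one must invoke the small-cancellation/ping-pong machinery for hyperbolically embedded subgroups from \cite{DGO}; everything else is bookkeeping with the endpoint-pair description of $E(\cdot)$ and almost malnormality.
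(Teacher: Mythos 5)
The paper does not actually prove this lemma: it quotes \cite[Lemma 5.6]{H}, checking only that Hull's proof uses WPD rather than acylindricity, so your attempt must be measured against Hull's argument, which rests on the small-cancellation machinery behind \cite[Theorem 6.14]{DGO}. Your first stage is correct and matches that argument: $K:=K_G(L)\le E(g)$ for every loxodromic $g\in L$ because $\langle g\rangle K$ is virtually cyclic and $E(g)$ is the unique maximal virtually cyclic subgroup containing $g$; and $\bigcap_g E(g)=K$ by conjugation-invariance, finiteness (via two non-commensurable loxodromics), and maximality of $K_G(L)$. The minimization of $|F(g_0)|$ is a reasonable way to organize the middle of the proof, but two things need flagging: (a) your claim that $x\notin E(h)$ automatically makes $h$ non-commensurable with $g_0$ is wrong as stated --- commensurability only gives $E(h)$ \emph{conjugate} to $E(g_0)$, not equal to it --- though this is repairable by intersecting only over loxodromics non-commensurable with $g_0$ (a conjugation-invariant class, so your normalization-plus-finiteness argument still yields $K$); and (b) every genuinely hard ingredient --- loxodromicity of $w=g_0^Nh^N$, the torsion confinement (which in truth gives only $F(w)\subseteq F(g_0)\cap g_0^N F(h)g_0^{-N}$, still enough for the contradiction), primitivity of the element modulo $K$, and the absence of axis-reversing elements --- is deferred to unnamed ``machinery from \cite{DGO}''. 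That machinery is exactly the content that \cite[Lemma 5.6]{H} packages, so at those points your write-up reduces the lemma to itself.

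The fatal error is the final ``spreading out'' step. In this paper, as in \cite{H} and \cite{DGO}, two loxodromic elements are \emph{commensurable} if some nonzero powers of them are conjugate \emph{in $G$}; this is the definition under which Lemma~\ref{L1} is true, since a hyperbolically embedded family must have $E(g_i)\cap tE(g_j)t^{-1}$ finite for \emph{all} $t\in G$ when $i\ne j$, which fails for conjugate subgroups. Your elements $g_i=f^ig_0f^{-i}$ are literally conjugate to one another, hence pairwise commensurable, so they do not satisfy the conclusion of the lemma --- and they could not be fed into Lemma~\ref{L1} in the proof of Theorem~\ref{thm:C}, which is the lemma's entire purpose in the paper. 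Your argument that commensurability of $g_i$ and $g_j$ forces $f^{i-j}\in E(g_0)$ breaks down because the conjugator $t$ in $tg_i^mt^{-1}=g_j^k$ is an arbitrary element of $G$, not a power of $f$: taking $t=f^{j-i}$ exhibits $g_i$ and $g_j$ as conjugate outright. The pairwise non-commensurable family has to come from the same product construction you black-boxed in the middle of the proof (for instance, elements of the form $g_0^Nh^{iN}$, $i=1,\dots,n$, whose pairwise non-commensurability and elementary subgroups $\langle g_0^Nh^{iN}\rangle\times K$ are controlled by the \cite{DGO} small-cancellation arguments), not from conjugation.
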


\begin{proof}[Proof of Theorem~\ref{thm:C}]
First we note that (3) implies that every element of $H$ is loxodromic with respect to the action on $X$. Also, the fact that two elements of $H$ are conjugate in $G$ if and only if they are conjugate in $H$ follows from the fact that $H\times K$ is almost malnormal in $G$ and $K$ acts trivially on $H$ by conjugation.

We  use the construction from [Theorem 6.14]\cite{DGO}. As in  [Theorem 6.14]\cite{DGO}, we let $n=2$ since the construction from this case can be easily modified for any $n$.

By Lemma \ref{lem:MS}, we can assume $X=\Ga$ for some $\mathcal A\subseteq G$. Let $g_1,..., g_6$ be elements of $L$ given by Lemma \ref{L2}. Then each $E(g_i)=\langle g_i\rangle\times K_G(L)$ and furthermore $\{E(g_1),...,E(g_6)\}\h (G, \mathcal A)$ by Lemma \ref{L1}. Let $$\mathcal E=\bigcup_{i=1}^6 E(g_i)\setminus\{1\}.$$ Let $H=\langle x, y\rangle$ where  $x=g_1^ng_2^ng_3^n$ and $y=g_4^ng_5^ng_6^n$ for sufficiently large $n$. It is shown in \cite{DGO} that $x$ and $y$ generate a free subgroup of $G$ and this subgroup is quasi-convex in $Cay(G, \mathcal A\cup\mathcal E)$. Hence $H$ (with the natural word metric) is quasi-isometrically embedded in $Cay(G, \mathcal A\cup\mathcal E)$, and since the map $\Ga\to Cay(G, \mathcal A\cup\mathcal E)$ is 1-lipschitz $H$ is also quasi-isometrically embedded in $\Ga$. Let $K=K_G(L)$. Since $x$ and $y$ both commute with $K$, $\langle H, K\rangle \cong H\times K$. Finally, we can apply \cite[Theorem 4.42]{DGO} to get that $H\times K\h G$. Verifing the assumptions of \cite[Theorem 4.42]{DGO} is identical to the proof of \cite[Theorem 6.14]{DGO}.
\end{proof}

Note that Theorem \ref{thm:B} is an immediate consequence of Theorem \ref{thm:C}.

\section{The case of $Out(F_N)$.}

We assume familiarity of the reader with the basics related to $Out(F_N)$ and Outer space. For the background information on these topics we refer the reader to~\cite{BF,BH,CV,FM}.

In what follows we assume that $N\ge 2$ is an integer, $CV_N$ is the (volume-one normalized) Culler-Vogtmann Outer space, $\mathcal F_N$ is the free factor complex for $F_N$, $d_C$ is the asymmetric Lipschitz metric on $CV_N$ and $d^{sym}_C$ is the symmetrized Lipschitz metric on $CV_N$. When we talk about the Hausdorff distance $d_{Haus}$ in $CV_N$, we always mean the Hausdorff distance with respect to $d^{sym}_C$. For $K\ge 1$, by a $K$-quasigeodesic in $CV_N$ we mean a function $\gamma:I\to CV_N$ (where $I\subseteq \mathbb R$) is an interval, such that for all $s,t\in I, s\le t$ we have
\[
 \frac{1}{K}(t-s)-K \le d_C(\gamma(s),\gamma(t))\le K(t-s)+K.
\]
For $\epsilon>0$ we denote by $CV_{N,\epsilon}$ the $\epsilon$-thick part of $CV_N$.

We recall a portion of one of the main technical results of Dowdall and Taylor, \cite[Theorem 4.1]{DT1}:
\begin{prop}\label{prop:DT}
Let $K\ge 1$ and let $\gamma:\mathbb R\to CV_N$ be a $K$-quasigeodesic such that its projection $\pi\circ \gamma: \mathbb R\to \mathcal F_N$ is also a $K$-quasigeodesic.  There exist constants $D>0,\epsilon>0$, depending only on $K$ and $N$, such that the following holds:

If $\rho:\mathbb R\to CV_N$ is any geodesic with the same endpoints as $\gamma$ then:
\begin{enumerate}
\item We have $\gamma(\mathbb R), \rho(\mathbb R)\subset CV_{N,\epsilon}$. 
\item We have $d_{Haus}(\gamma(\mathbb R), \rho(\mathbb R))\le D$.
\end{enumerate}
 
\end{prop}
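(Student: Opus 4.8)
My plan is to treat Proposition~\ref{prop:DT} as a \emph{stability} (Morse-type) statement for the quasigeodesic $\gamma$, driven by hyperbolicity of the free factor complex $\mathcal F_N$ together with a dictionary between the thin parts of $CV_N$ and the coarse behaviour of the projection $\pi\colon CV_N\to\mathcal F_N$. The guiding principle is that $\pi$ is coarsely Lipschitz and that it \emph{stalls} -- stays coarsely constant in $\mathcal F_N$ -- exactly when a path in $CV_N$ lingers in a thin region, because a very short free factor pins down the image in $\mathcal F_N$. Since by hypothesis $\pi\circ\gamma$ is a genuine $K$-quasigeodesic, $\gamma$ makes definite progress in $\mathcal F_N$ and therefore cannot stall; this single mechanism should simultaneously force thickness and fellow-traveling.

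First I would prove (1) for $\gamma$. If $\gamma$ were to enter the $\delta$-thin part for some very small $\delta$, then on that subinterval there is a free factor realized by a loop of length below $\delta$. The quantitative thin-part dictionary should bound the $\mathcal F_N$-diameter of the portion of $\gamma$ spent in that thin region in terms of $\delta$ and of the short free factor, \emph{independently} of the $d_C$-parameter length spent there. But a $K$-quasigeodesic in $\mathcal F_N$ has $\mathcal F_N$-diameter growing linearly in its parameter length, and $\gamma$ is a $K$-quasigeodesic in $CV_N$; comparing these two estimates produces a uniform $\epsilon=\epsilon(K,N)>0$ below which $\gamma$ cannot go, i.e. $\gamma(\mathbb R)\subset CV_{N,\epsilon}$.

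Next I would transfer everything to the arbitrary geodesic $\rho$. The key intermediate claim is that $\pi\circ\rho$ is itself a quasigeodesic in $\mathcal F_N$, with constants depending only on $K$ and $N$. To see this I would compare $\rho$ with a folding path having the same endpoints, using that folding paths project to reparametrized quasigeodesics in $\mathcal F_N$ and that no $d_C$-geodesic can shortcut the $\mathcal F_N$-progress already forced by the common endpoints (whose $\mathcal F_N$-images are far apart because $\pi\circ\gamma$ is an unbounded quasigeodesic). Once $\pi\circ\rho$ is a quasigeodesic, the Morse lemma in the hyperbolic space $\mathcal F_N$ shows $\pi\circ\rho$ and $\pi\circ\gamma$ uniformly fellow-travel, and the thin-part argument above applies verbatim to $\rho$, giving $\rho(\mathbb R)\subset CV_{N,\epsilon}$ after possibly shrinking $\epsilon$. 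This establishes (1) for both paths.

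Finally, for the Hausdorff bound (2) I would lift the $\mathcal F_N$-fellow-traveling back to $CV_N$, and here the thickness from (1) is essential: on $CV_{N,\epsilon}$ the projection $\pi$ has coarsely bounded fibers relative to $d^{sym}_C$, so two thick points whose $\mathcal F_N$-images are close and which lie on paths making monotone $\mathcal F_N$-progress must be $d^{sym}_C$-close. Applying this to each point of $\gamma$ (producing a nearby point of $\rho$, and symmetrically) yields $d_{Haus}(\gamma(\mathbb R),\rho(\mathbb R))\le D$ with $D=D(K,N)$. I expect the main obstacle to be the quantitative thin-part dictionary itself: precisely bounding the $\mathcal F_N$-displacement of a $CV_N$-path in terms of the shortest free-factor length, and conversely reconstructing $d^{sym}_C$-closeness from $\mathcal F_N$-closeness plus uniform thickness. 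A second delicate point is controlling the \emph{asymmetry} of $d_C$ throughout, so that one-directional Lipschitz and progress estimates transfer correctly to the symmetrized metric $d^{sym}_C$ and hence to $d_{Haus}$.
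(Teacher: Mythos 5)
The first thing to say is that the paper contains no proof of Proposition~\ref{prop:DT}: it is explicitly introduced as ``a portion of one of the main technical results of Dowdall and Taylor,'' and its proof in the paper \emph{is} the citation \cite[Theorem 4.1]{DT1}. So your attempt is not being compared with an internal argument but with a substantial external theorem, and what you have written is an outline for redoing Dowdall--Taylor's stability machinery rather than a proof. Even the part of your outline that is closest to correct --- thickness of $\gamma$ itself --- defers exactly the load-bearing estimates. Your claim that the $\mathcal F_N$-diameter of a thin excursion is bounded ``independently of the $d_C$-parameter length spent there'' is only true for the region where a \emph{fixed} loop stays short: in a long excursion through the thin part the short loop can switch repeatedly (two compatible loops can be short simultaneously), and each switch moves the projection a bounded amount, so the thin part as a whole permits unbounded $\mathcal F_N$-drift, just as the thin part of Teichm\"uller space maps onto the whole curve complex. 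The repair requires the estimate you never state: in the asymmetric metric it is cheap to enter the thin part but lengthening a $\delta$-short loop back to definite length costs $d_C$-distance on the order of $\log(1/\delta)$; only this converts the bounded stalling time of the parametrized quasigeodesic $\pi\circ\gamma$ into a lower bound on injectivity radius.

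Two later steps would fail as written. First, the transfer to $\rho$ is circular: asserting that ``no $d_C$-geodesic can shortcut the $\mathcal F_N$-progress forced by the common endpoints'' is precisely the claim that $\pi\circ\rho$ makes uniform progress, i.e.\ that $\rho$ never dives into the thin part and stalls, and nothing in your sketch rules this out --- coarse Lipschitzness plus hyperbolicity of $\mathcal F_N$ give only upper bounds on displacement, never the needed lower bounds, and the Morse lemma in $\mathcal F_N$ cannot be invoked until the quasigeodesic property of $\pi\circ\rho$ is already established. In \cite{DT1} this is exactly where folding-path technology enters: folding paths project to unparametrized quasigeodesics \cite{BF}, and thick folding lines whose projections make definite progress are strongly contracting, which is what forces an arbitrary geodesic with the same endpoints to follow them. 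Second, your final step rests on a false statement: the fibers of $\pi$ over the thick part are \emph{not} coarsely bounded in $d^{sym}_C$. If $\phi\in Out(F_N)$ is an infinite-order element preserving a proper free factor $A$, and $x$ is thick, then all the points $\phi^n x$ are uniformly thick (the injectivity radius is $\phi$-invariant), their projections all lie within bounded $\mathcal F_N$-distance of the fixed vertex $A$, yet $d^{sym}_C(x,\phi^n x)\to\infty$ by properness of the action on the thick part. So ``thick with close projections'' does not imply $d^{sym}_C$-close; and if you strengthen the hypothesis to ``lying on progressing paths with the same endpoints,'' then the claim becomes the fellow-traveling statement (2) itself and cannot be used to prove it. Given that the paper imports the statement wholesale, the correct course here is to cite \cite[Theorem 4.1]{DT1} rather than attempt to reconstitute its proof.
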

Here saying that $\gamma$ and $\rho$ have the same endpoints means that $\sup_{t\in \mathbb R} d_C^{sym}(\gamma(t),\rho(t))<\infty$.

\begin{proof}[Proof of Theorem~\ref{thm:A}]
By \cite{BF}, the action of $Out(F_N)$ on $\mathcal F_N$ satisfies the hypothesis of Theorem~\ref{thm:C}. Let $H$ be the subgroup provided by Theorem \ref{thm:C} with $L=Out(F_n)$.

We fix a free basis $\mathcal A=\{a,b\}$ for the free group $H$, and let $d_\mathcal A$ be the corresponding word metric on $H$. 

Note that the assumptions on $H$ imply that every nontrivial element of $H$ is fully irreducible.  Moreover, if we pick a base-point $p$ in $\mathcal F_N$, then there is $K\ge 1$ such that the image of every geodesic in the Cayley graph $Cay(H,\mathcal A)$ in $\mathcal F_N$ under the orbit map is a (parameterized) $K$-quasigeodesic.

 Pick a basepoint $G_0\in CV_N$.
Since the projection $\pi:(CV_N,d_C)\to \mathcal F_N$ is coarsely Lipschitz~\cite{BF}, and since the orbit map $H\to \mathcal F_N$ is a quasi-isometric embedding, it follows that the orbit map $(H,d_\mathcal A)\to (CV_N,d_C), u\mapsto u G_0$ is a $K_1$-quasi-isometric embedding for some $K_1\ge 1$. Moreover, the image of this orbit map lives in an $\epsilon_0$-thick part of $CV_N$ (where $\epsilon_0$ is the injectivity radius of $G_0$). Since on $CV_{N,\epsilon_0}$ the metrics $d_C$ and $d_C^{sym}$ are bi-Lipschitz equivalent, it follows that the orbit map $(H,d_\mathcal A)\to (CV_N,d_C^{sym}), u\mapsto u G_0$ is a $K_2$-quasi-isometric embedding for some $K_2\ge 1$.
For every $c\in \mathcal{A}^{\pm 1}$ fix a $d_C$-geodesic $\tau_c$ from $G_0$ to $cG_0$. 

Now let $\gamma: I\to Cay(H,\mathcal A)$ be a geodesic such that $\gamma^{-1}(H)=I\cap \mathbb Z$ and such that the endpoints of $I$ (if any) are integers. We then define a path $\underline{\gamma}:I\to CV_N$ as follows. Whenever $n\in \mathbb Z$ is such that $[n,n+1]\subseteq I$, then $\gamma(n)=g$ and $\gamma(n+1)=gc$ for some $c\in \mathcal A^{\pm 1}$. In this case we define $\gamma|_{[n,n+1]}$ to be $g\tau_c$.  Then for every geodesic $\gamma: I\to Cay(H,\mathcal A)$ as above the path $\underline{\gamma}:I\to CV_N$ is $K_3$-quasigeodesic, with respect to both $d_C$ and $d_C^{sym}$, for some $K_3\ge 1$ independent of $\gamma$. Moreover, $\underline{\gamma}(I)\subset CV_{N,\epsilon_1}$ for some $\epsilon_1>0$ independent of $\gamma$.

 Let $w$ be a cyclically reduced word of length $n\ge 1$ in $H$. Consider the bi-infinite $w^{-1}$-periodic geodesic $\gamma:\mathbb R\to Cay(H,\mathcal A)$ with $\gamma(0)=1$ and $\gamma(n)=w^{-1}$.   Thus the path the path $\underline{\gamma}:I\to CV_N$ is $K_3$-quasigeodesic, with respect to both $d_C$ and $d_C^{sym}$,  and $\underline{\gamma}(I)\subset CV_{N,\epsilon_1}$. Since $1\ne w\in H$, it follows that $w$ is fully irreducible as an element of $Out(F_N)$. Hence $w$ can be represented by an expanding irreducible train-track map $f:G\to G$ with the Perron-Frobenius eigenvalue $\lambda(f)$ equal to the stretch factor $\lambda(w)$ of the outer automorphism $w\in Out(F_N)$. There exists a volume-one ``eigenmetric" $d_f$ on $G$ with respect to which $f$ is a local $\lambda(f)$-homothety.  Then, if we view $(G,d_f)$ as a point of $CV_N$, then $d_C(G,Gw)=\log\lambda(w)$.  Moreover, in this case we can construct a $w$-periodic $d_C$-geodesic folding line $\rho:\mathbb R\to CV_N$ with the property that for any integer $i$, $\rho(i)=Gw^i$, hence for integers $i<j$ $d_C(Gw^i,Gw^j)=(j-i)\log\lambda(w)$. Thus, for any $i>0$ we have $d_C(G, w^{-i}G)=i\log\lambda(w)$.

The bi-infinite lines $\rho$ and $\underline{\gamma}$ are both $w^{-1}$-periodic (in the sense of the left action of $w^{-1}$) and therefore $\sup_{t\in \mathbb R} d_C^{sym}(\underline\gamma(t),\rho(t))<\infty$.

Hence, by Proposition~\ref{prop:DT}, there exist constants $D>0$ and $\epsilon>0$ (independent of $w$) such that $\rho\subset CV_{N,\epsilon}$ and that $d_{Haus}(\rho,\underline\gamma))\le D$.  The fact that $\rho\subset CV_{N,\epsilon}$ implies that $\rho$ is a $K_4$-quasigeodesic with respect to $d_C^{sym}$ for some constant $K_4\ge 1$ independent of $w$. Now for the asymptotic translation length $||w||_{CV}$, where $w$ is viewed as an isometry of $(CV_N, d_C^{sym})$, we get that one one hand (using the line $\rho$)
\[
\frac{1}{K_4}\log\lambda(w^{-1})\le ||w||_{CV} \le K_4 \log \lambda(w^{-1})
\]
and on the other hand (using the line $\underline\gamma$) that

\[
\frac{1}{K_3}n\le ||w||_{CV} \le K_3 n.
\]

Therefore

\[
\frac{1}{K_3K_4}n\le \log\lambda(w^{-1}) \le K_3K_4n.
\]
Recall also, that by a result of Handel and Mosher~\cite{HM}, there exists a constant $M=M(N)\ge 1$ such that for every fully irreducible $\phi\in Out(F_N)$ we have $\frac{1}{M}\le \frac{\log\lambda(\phi)}{\log\lambda(\phi^{-1})}\le M$.
Therefore for $w$ as above we have

\[
\frac{1}{K_3K_4M}n\le \log\lambda(w) \le K_3K_4Mn.
\]

Recall that $\mathcal A=\{a,b\}$ and that $S$ is a finite generating set for $Out(F_N)$. Put $M'=\max\{|a|_S,|b|_S\}$, so that for every freely reduced word $w$ in $H$ we have $|w|_S\le M'|w|_\mathcal A$. 

For $R>> 1$ put $n=\lfloor \frac{R}{M'}\rfloor$.  The number of distinct $H$-conjugacy classes represented by cyclically reduced words $w$ of length $n$ is $\ge 2^n$ for $n$ big enough. 
Recall two elements of $H$ are conjugate in $H$ of and only if they are conjugate in $Out(F_N)$.  Therefore we get

\[
\ge 2^n\ge 2^{ \frac{R}{M'}-1}
\] distinct $Out(F_N)$-conjugacy classes from such words $w$.  As we have seen above, each such $w$ gives us a fully irreducible element of $Out(F_N)$ with 

\[
\frac{1}{K_3K_4M}\frac{R}{2M'}\le \log\lambda(w) \le K_3K_4Mn\le K_3K_4M \frac{R}{M'},
\] 
and the statement of Theorem~\ref{thm:A} is verified.
\end{proof}

\begin{rem}\label{rem:exp}
As noted in the introduction, unlike in the mapping class group case, we expect that for $N\ge 3$ the number of $Out(F_N)$-conjugacy classes of fully irreducibles $\phi\in Out(F_N)$ with $\log\lambda(\phi)\le R$ grows as a double exponential in $R$. A double exponential upper bound follows from general Perron-Frobenius considerations. Every fully irreducible $\phi\in Out(F_N)$ can be represented by an expanding irreducible train track map $f:G\to G$, where $G$ is a finite connected graph with $b_1(G)=N$ and with all vertices in $G$ of degree $\le 3$. The number of possibilities for such $G$ is finite in terms of $N$. By \cite[Proposition~A.4]{KB}, if $f$ is as above and $\lambda:=\lambda(f)$, then 
$\max m_{ij}\le k \lambda^{k+1}$ (where $k$ is the number of edges in $G$ and $M=(m_{ij})_{ij}$ is the transition matrix of $f$). If $\log \lambda\le R$, we get $\max \log m_{ij} \le \log k + (k+1) R$ and $\max m_{ij}\le k e^{(k+1)R}$. Thus we get exponentially many (in terms of $R$) possibilities for the transition matrix $M$ of $f$. Since for a given length $L$ there are exponentially many paths of length $L$ in $G$, this yields an (a priori) double exponential upper bound for the number of train track maps representing fully irreducible elements of $Out(F_N)$ with $\log\lambda \le R$.

For the prospective double exponential lower bound we give the following explicit construction for the case of $F_3$.  Let $w\in F(b,c)$ be a nontrivial positive word containing the subwords $b^2$, $c^2$, $bc$ and $cb$.  Consider the automorphism $\phi_w$ of $F_3=F(a,b,c)$ defined as $\phi_w(a)=b$, $\phi_w(b)=c$, $\phi_w(c)=aw(b,c)$. We can also view $\phi_w$ as a graph map $f_w:R_3\to R_3$ where $R_3$ is the $3$-rose with the petals marked by $a$, $b$, $c$.  Then $f_w$ is an expanding irreducible train track map representing $\phi_w$. Moreover, a direct check shows that, under the assumptions made on $w$, the Whitehead graph of $f_w$ is connected. Additionally, for a given $n\ge 1$, ``most" positive words of length $n$ in $F(b,c)$ satisfy the above conditions and define fully irreducible automorphisms $\phi_w$. To see this, observe that the free-by-cyclic group $G_{w}=F_3\rtimes_{\phi_w} \mathbb Z$ can be rewritten as a one-relator group:
\begin{gather*}
G_w=\langle a,b,c,t| t^{-1}at=b, t^{-1}bt=c, t^{-1}ct=aw(b,c)\rangle=\\
\langle a, t | t^{-3}at^3=aw(t^{-1}at, t^{-2}at^2)\rangle.
\end{gather*}
Moreover, one can check that if $w$ was a $C'(1/20)$ word, then the above one-relator presentation of $G_w$ satisfies the $C'(1/6)$ small cancellation condition, and therefore $G_w$ is word-hyperbolic and the automorphism $\phi_w\in Out(F_3)$ is atoroidal. Since, as noted above, $\phi_w$ admits an expanding irreducible train track map on the rose with connected Whitehead graph, a result of Kapovich~\cite{Ka} implies that $\phi$ is fully irreducible. Moreover, if $|w|=L$, then it is not hard to check that  $\log \lambda(f_w)=\log\lambda(\phi_w)$ grows like $\log L$.

Since ``random" positive words $w\in F(b,c)$ are $C'(1/20)$ and contain $b^2,c^2,cb,bc$ as subwords, for sufficiently large $R\ge 1$, the above construction produces doubly exponentially many atoroidal fully irreducible automorphisms $\phi_w$ with $|w|=e^R$ and $\log\lambda(\phi_w)$ on the order of $R$.  We conjecture that in fact most of these elements are pairwise non-conjugate in $Out(F_3)$ and that this method yields doubly exponentially many fully irreducible elements $\phi$ of $Out(F_3)$ with $\log\lambda(\phi)\le R$. However, verifying this conjecture appears to require some new techniques and ideas beyond the reach of this paper. 
\end{rem}

\vspace{1cm}

\end{document}